\theoremstyle{plain}
\newtheorem{thm}{\protect\theoremname}[section]
\newtheorem{prop}[thm]{\protect\propositionname}
\theoremstyle{remark}
\newtheorem{rem}[thm]{\protect\remarkname}
\theoremstyle{definition}
\providecommand{\theoremname}{Theorem}
\providecommand{\lemmaname}{Lemma}
\providecommand{\corollaryname}{Corollary}
\providecommand{\propositionname}{Proposition}
\providecommand{\conjecturename}{Conjecture}
\providecommand{\criterionname}{Criterion}
\providecommand{\assertionname}{Assertion}
\providecommand{\remarkname}{Remark}
\providecommand{\notename}{Note}
\providecommand{\notationname}{Notation}
\providecommand{\claimname}{Claim}
\providecommand{\summaryname}{Summary}
\providecommand{\acknowledgmentname}{Acknowledgment}
\providecommand{\casename}{Case}
\providecommand{\conclusionname}{Conclusion}
\providecommand{\definitionname}{Definition}
\providecommand{\conditionname}{Condition}
\providecommand{\problemname}{Problem}
\providecommand{\examplename}{Example}
\providecommand{\exercisename}{Exercise}
\providecommand{\algorithmname}{Algorithm}
\providecommand{\questionname}{Question}
\providecommand{\axiomname}{Axiom}
\providecommand{\propertyname}{Property}
\providecommand{\assumptionname}{Assumption}
\providecommand{\hypothesisname}{Hypothesis}
\newcommand{\R}{{\mathbb R}}
\renewcommand{\d}{{\mathrm d}}
\begin{document}
\title{Identification of the residual term in multiplicative self-decomposition using Fox $H$-functions}
\author{\and \textbf{Mohamed Erraoui}\\
 Department of Mathematics, Faculty of Science, El Jadida,\\
 Choua{\"i}b Doukkali University, Morocco\\
 Email: erraoui.mohamed@ucd.ac.ma
\and \textbf{Jos\'e Lu{\'i}s da Silva},\\
 CIMA, Faculty of Exact Sciences and Engineering,\\
 University of Madeira, Campus da Penteada,\\
 9020-105 Funchal, Portugal.\\
 Email: joses@staff.uma.pt}
\date{\today \hspace{0.5cm}\currenttime}
\maketitle
\begin{abstract}
Multiplicative self-decomposable laws describe random variables that can be decomposed into a product of a scaled-down version of themselves and an independent residual term. Shanbhag et al.~(1977) have shown that the gamma distribution is multiplicative self-decom\-posable, in particular, the exponential distribution. As a result, they established the multiplicative self-decomposability of the absolute value of a centered normal random variable. A limitation of Shanbhag's result is that the distribution of the residual component is not explicitly identified. In this paper, we aim to fill this gap by providing an explicit distribution of the residual term using a Fox $H$-function. More precisely, the residual term follows an $M$-Wright distribution for the exponential distribution, whereas for the generalized gamma distribution and the absolute value of a centered normal random variable, an $H_{1,1}^{1,0}$ distribution with different parameters.  \\[.5cm]
AMS Classification 2020: Primary: 60E05, 60E07. Secondary: 44A20, 33C60, 33E12.
\end{abstract}
\tableofcontents{}

\section{Introduction}
A real random variable $X$ with distribution (or law) $\mu$ is said to be self-decomposable
(cf.~\cite[Definition~15.1]{Sato1999}) if, for any constant $\alpha\in(0,1)$,
there exists an independent random variable (called the residual term), denoted as $X_{\alpha}$, such
that
\begin{equation}\label{add-decomp}
X\overset{\mathcal{L}}{=}\alpha X+X_{\alpha}.
\end{equation}
The symbol $\overset{\mathcal{L}}{=}$ used here and below denotes equality in law. Specifically, the characteristic function $\hat{\mu}(z):=\mathbb{E}[\mathrm{e}^{\mathrm{i} z X}]$ satisfies
\[
  \hat{\mu}(z)=\hat{\mu}(\alpha z)\hat{\mu}_\alpha (z),
\] 
where $\mu_\alpha$ is the distribution of $X_\alpha$.  
Classic examples of self-decomposable distributions include Gaussian distributions on $\R^d$, gamma distributions, and Cauchy distributions on $\R$. We note that self-decomposable distributions are an important subclass of infinitely divisible laws and have unimodal associated densities; see \cite[p.~404]{Sato1999}.

In the paper \cite{Shanbhag.1977}, the authors established the self-decomposability of
the random variable $\log(Z)$ (resp.~$\log(|U|)$, where $Z$ is a gamma (resp.~$U$ is a centered normal) random variable.
This is equivalent to considering the ``multiplicative'' self-decomposability
of $Z$ and $|U|$, expressed as follows: for every $\alpha\in(0,1)$, we have 
\begin{equation}
Z\overset{\mathcal{L}}{=}Z^{\alpha} Z_\alpha ,\label{mult-decomp-Z}
\end{equation}
and
\begin{equation}
   |U|\overset{\mathcal{L}}{=} |U|^{\alpha}X_{\alpha}, \label{mult-decomp-U}
\end{equation}
 where $Z_\alpha$ and $X_{\alpha}$ are independent residual terms. In the particular case where $Z$ has an exponential distribution, the random variable $Z_\alpha$ takes the form
$Y_{\alpha}^{-\alpha}$, with $Y_{\alpha}$ having a stable one-sided
distribution with parameter $\alpha$; see Eq.~(2) in \cite{Shanbhag.1977}.

One key element missing from Shanbhag's work is the explicit form of the probability density function (pdf) of the residual term. This absence can be understood because, at the time, stable distributions generally lacked closed-form expressions. Only a few special cases, such as the normal, Cauchy, and Lévy distributions, were known to admit explicit formulas.

We take this opportunity to highlight an important yet often overlooked paper by Schneider \cite{Schneider1986}, in which the Mellin transform is used to represent stable distributions as Fox $H$-functions. Schneider demonstrated that stable densities arise as particular instances of Fox $H$-functions, thereby providing a unified representation of these distributions and offering a powerful analytical framework for effective use. Alternative formulations of stable densities can also be found in the works of Mainardi et al.~\cite{Mainardi:2003vn,Mainardi2006} and \cite{Kataria2018}. 
Our contribution in this paper aims to fill this gap and to show that the decompositions \eqref{mult-decomp-Z} and \eqref{mult-decomp-U} provide a characterization of $Z$ and $U$. To this end, we employ the Mellin transform. This tool is particularly well-suited to the study of products of independent random variables, in contrast to the characteristic function used in Shanbhag's work. This choice is motivated by the fact that the Mellin transform of the product of two independent random variables equals the product of their Mellin transforms. Consequently, the Mellin transform of the residual term is obtained as the quotient of the Mellin transform of the original distribution and the Mellin transform of its reduced form. To achieve the first aim, the primary task involves identifying the probability density function associated with this resulting transform. We find that the density of the residual term $Z_\alpha$ can be expressed in terms of the $M$-Wright function in the exponential case (cf.~Theorem~\ref{thm:self-decomp-exp}); whereas for the gamma distribution, it is expressed as a Fox $H$-function; see Theorem~\ref{thm:self-decomp-gamma} below. For the absolute value of a centered normal random variable, the density distribution of $Z_\alpha$ is expressed using the Wright function; see Theorem~\ref{thm:decomposition-Gaussian}. 

The second objective, which we consider particularly noteworthy, is that the decompositions given in \eqref{mult-decomp-Z} and \eqref{mult-decomp-U}, involving the aforementioned residual terms, lead to a characterization of the Gamma and standard Gaussian distributions. This result is obtained by solving an appropriate functional equation and by invoking the uniqueness property of the Mellin transform, see Propositions~\ref{prop:characterization-exp}, \ref{prop:characterization-gamma}, and \ref{prop:characterization-Gaussian} below.

The paper is organized as follows. In Section~\ref{sec:decomp-exp}, we recall the definition and some properties of the Mellin transform that will be necessary for subsequent discussions. In addition, the $M$-Wright density function and its key properties are reviewed. It will be the density of the residual term in the self-decomposition of the exponential random variable; cf.~Theorem~\ref{thm:self-decomp-exp}. We show that the self-decomposition \eqref{exp-decomp}, where the residual term follows an $M$-Wright density, characterizes the exponential distributions; see Proposition~\ref{prop:characterization-exp}. Section~\ref{sec:decomp-gamma} addresses the gamma case, where we show that the residual term has a density involving a Fox $H$-function; see Theorem~\ref{thm:self-decomp-gamma} and Proposition~\ref{prop:characterization-gamma}. In Section~\ref{sec:decomp-gaussian}, as a consequence of the result of Section~\ref{sec:decomp-gamma}, we express the density of the residual term in the self-decomposition of the absolute value of a centered normal random variable as a Wright function.

\section{Multiplicative self-decomposition of the exponential distribution\label{sec:decomp-exp}}
First, we review the tools required for this work, namely the Mellin transform and the $M$-Wright function, and recall their most relevant properties. For any sufficiently well-behaved function $f:\mathbb{R^{+}}\longrightarrow\R$ the Mellin transform of $f$ is defined by
\begin{equation}\label{eq:Mellin-transform}
    (\mathcal{M}f)(z):=\int_0^\infty f(x)x^{z-1}\,\mathrm{d}x,\quad z\in\mathbb{C},
\end{equation}
whenever the integral converges. 
The integral \eqref{eq:Mellin-transform} defines the Mellin transform in a vertical strip in the $z$-plane whose boundaries are determined by the analytic structure of $f(x)$ as $x \to 0^+$ and $x \to +\infty$.
For any function with the following asymptotic behavior   
\[
f(x) =
\begin{cases} 
O\left( x^{-a + \varepsilon} \right) & \text{as } x \to 0^+, \\
O\left( x^{-b - \varepsilon} \right) & \text{as } x \to +\infty,
\end{cases}
\]
for every (small) $\varepsilon > 0$ and $a < b$, the integral \eqref{eq:Mellin-transform} converges absolutely and defines an analytic function in the strip $a < \Re (z) < b$. This strip is known as \emph{strip of analyticity} of $(\mathcal{M}f)(\cdot)$, see Theorem 1.5.18 in  \cite{Glaeske2006}.
 
As discussed earlier, the Mellin transform is a fundamental tool in probability theory. Its primary application lies in characterizing the distributions of products of independent random variables via the Mellin convolution formula 
\begin{equation}\label{eq:Mellin-convolution}
(f\star g)(x):=\int_{0}^{\infty}f\left(\frac{x}{y}\right)g(y)\,\frac{\mathrm{d}y}{y}=\int_{0}^{\infty}f(y)g\left(\frac{x}{y}\right)\frac{\mathrm{d}y}{y},
\end{equation}
whenever the integral exists. To illustrate, given two independent random variables $X$ with density $\rho_X$ and $Y$ with density $\rho_Y$, the density $\rho_{XY}$ of the product $XY$ is given in terms of the Mellin convolution, that is, $\rho_{XY}=\rho_X\star\rho_Y$, see, for example, \cite{Zolotarev.1957, Springer.1979, Carter1977}. This is achieved through the following property: Let $f,g$ be given so that $f\star g$ is well-defined, then
\[
(\mathcal{M}(f\star g))(z)=(\mathcal{M}f)(z)(\mathcal{M}g)(z).
\]
Invoking the Mellin transform allows us to derive an equivalent representation of the self-decomposition as follows: let $X$ be a positive random variable satisfying Assumption (H) (below). We denote its Mellin transform by
\[
h(z)=\mathbb{E}[X^{z-1}],\quad  a<\Re(z)<b.
\]
So, $X$ is multiplicative self-decomposable if and only if for any $\alpha \in (0,1)$ the function
\[
F_\alpha (z)=\frac{h(z)}{h(\alpha(z-1)+1)},  a<\Re(z)<b,
\]
is the Mellin transform of a positive random variable $X_\alpha$. 

As we will see, the self-decomposability of the exponential distribution involves Wright functions, specifically the variant known as the Mainardi function.
The Wright function $W_{\lambda,\mu}(z)$, introduced and studied by
Wright in \cite{Wright1933,Wright1935, Wright1935a}, is defined by the series representation,  which converges for all complex $z$: 
\[
W_{\lambda,\mu}(z):=\sum_{n=0}^{\infty}\frac{z^{n}}{n!\,\Gamma(\lambda n+\mu)},\quad\lambda>-1,\quad\mu\in\mathbb{C}.
\]
This series defines an entire function on the complex plane.

In his study of the time-fractional diffusion-wave equation \cite{Mainardi.1993},
Main\-ardi introduced a special case of the Wright function, denoted
by $M_{\beta}$, defined as 
\[
M_{\beta}(z):=W_{-\beta,\,1-\beta}(-z)=\sum_{n=0}^{\infty}\frac{(-z)^{n}}{n!\,\Gamma(1-\beta(n+1))},\quad0<\beta<1.
\]
Here are some special and well-known examples:
\[
M_{1/2}(z)=\frac{1}{\sqrt{\pi}}e^{-\frac{z^{2}}{4}}\quad \text{and}\quad M_{1/3}(z)=3^{2/3}\,\mathrm{Ai}\left(3^{-1/3}z\right),
\]
where $\mathrm{Ai}(\cdot)$ is the Airy function. We point out that the function $M_{\beta}$ finds its most significant applications
when its argument is real. We present some important properties of this function.
\begin{itemize}
\item \textbf{Subordination formula:} For $x>0$, the following integral
relation holds: 
\[
M_{\alpha\beta}(x)=\int_{0}^{\infty}s^{\alpha}\,M_{\alpha}\left(\frac{x}{s^{\alpha}}\right)M_{\beta}(s)\,\d s,\quad0<\alpha,\beta<1.
\]
\item \textbf{Absolute moments:} The moments of $M_{\beta}$ are finite
and are given by 
\[
\int_{0}^{\infty}x^{\delta}\,M_{\beta}(x)\,dx=\frac{\Gamma(\delta+1)}{\Gamma(\beta\delta+1)},\quad\delta>-1,\quad0\leq\beta<1.
\]
\end{itemize}
Hence, for $\delta=0$ we obtain \begin{equation}\label{normalisation}
    \int_{0}^{\infty}M_{\beta}(x)\,dx=1.
\end{equation}
The Laplace transform of $M_{\beta}$ is expressed in terms of the
classical Mittag-Leffler function $E_{\beta}$: 
\begin{equation}\label{eq:LT-Mbeta}
\mathcal{L}\left(M_{\beta}\right)(s)=\int_{0}^{\infty}e^{-sx}M_{\beta}(x)\,dx=E_{\beta}(-s)=\sum_{n=0}^{\infty}\frac{(-s)^{n}}{\Gamma(\beta n+1)}.
\end{equation}
To interpret $M_{\beta}$ as a probability density function (pdf), the property of non-negativity is a necessary requirement. It is known (see \cite{Pollard48, Feller71}) that $E_{\beta}(-x)$, for $x>0$,
is \emph{completely monotone} if and only if $0<\beta\leq1$. Bernstein's
theorem implies that $M_{\beta}$ is a \emph{nonnegative function}.
Hence, with \eqref{normalisation}, we can consider it as a pdf supported on $[0,+\infty)$. 

A positive random variable $Y_\beta$ is said to follow an $M$-Wright distribution if its distribution is absolutely continuous with respect to the Lebesgue measure, with the density function $M_\beta(t),\,t\geq 0$.
For certain specific values of the parameter $\beta$, the associated random variable $Y_\beta$ can be identified. For example, when $\beta=1/2$, $Y_{1/2}$ coincides with the one-sided Gaussian random variable with density $M_{1/2}(t)=\pi^{-1/2}\mathrm{e}^{-t^2/4}$.

\begin{rem}\label{rem:limit-beta-zero}
    The exponential random variable, denoted by $Y_{0}$, can be regarded as a limiting case of the family $Y_\beta$ as $\beta\to 0^+$. This offers an alternative interpretation of Cressie's result (see \cite{cressie.1975} and also \cite{DuMouchel.1971}) and differs from the one presented in Shanbhag et al.~\cite{Shanbhag.1977}. It represents a weak continuity result.
\end{rem}

We provide the multiplicative self-decomposition of the exponential random
variable $Y_{0}$ in terms of the random variables $Y_{\beta}$ with $0<\beta < 1$. 

The following theorem yields the desired self-decomposition, which can also be regarded as an extension of Eq.~(6.4) in \cite{Mainardi:2003vn} to the case $\eta=0$. 
\begin{thm}\label{thm:self-decomp-exp}
For any $0<\beta <1$, the following self-decomposition holds 
\begin{equation}\label{exp-decomp}
Y_{0}\overset{\mathcal{L}}{=}Y_{0}^{\beta}Y_{\beta}. 
\end{equation}
\end{thm}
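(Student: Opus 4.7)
The plan is to compare the Mellin transforms of the two sides of \eqref{exp-decomp} and invoke uniqueness. Since $Y_0$ and $Y_\beta$ are independent positive random variables, the density of the product $Y_0^{\beta} Y_\beta$ is the Mellin convolution $\rho_{Y_0^{\beta}}\star M_\beta$, and by the multiplicativity of $\mathcal{M}$ under $\star$ it suffices to check, on a common vertical strip of analyticity, that
\[
(\mathcal{M}\rho_{Y_0})(s) = (\mathcal{M}\rho_{Y_0^{\beta}})(s)\,(\mathcal{M}M_\beta)(s).
\]

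I would then compute the three Mellin transforms explicitly. For the exponential, $(\mathcal{M}\rho_{Y_0})(s)=\int_0^\infty e^{-t}t^{s-1}\,\mathrm{d}t=\Gamma(s)$, valid for $\Re(s)>0$. For $Y_0^\beta$, either a direct change of variables in the density of $Y_0$, or the identity $\mathbb{E}[(Y_0^{\beta})^{s-1}]=\mathbb{E}[Y_0^{\beta(s-1)}]$, gives $(\mathcal{M}\rho_{Y_0^{\beta}})(s)=\Gamma(\beta(s-1)+1)$, analytic on $\Re(s)>1-1/\beta$.

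The main obstacle is the Mellin transform of $M_\beta$. The cleanest route is to extract the moments of $Y_\beta$ from the Mittag--Leffler identity \eqref{eq:LT-Mbeta}: expanding both sides in powers of $s$ and matching coefficients yields
\[
\int_0^\infty t^n M_\beta(t)\,\mathrm{d}t = \frac{n!}{\Gamma(\beta n+1)},\qquad n\ge 0,
\]
which suggests $(\mathcal{M}M_\beta)(s)=\Gamma(s)/\Gamma(\beta(s-1)+1)$ after the substitution $n\mapsto s-1$. To upgrade this integer identity to a genuine Mellin transform on a strip, I would either integrate the series \eqref{wright function} termwise, using Euler's reflection formula to rewrite $\Gamma(1-\beta-\beta n)^{-1}$ as $\Gamma(\beta(n+1))\sin(\pi\beta(n+1))/\pi$ and summing, or combine the moment sequence with the known asymptotics of $M_\beta$ at $0$ and $\infty$ and apply Carlson's theorem to force agreement of the two meromorphic candidates.

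Once these three Mellin transforms are in hand, the verification is a one-line cancellation:
\[
\Gamma(\beta(s-1)+1)\cdot\frac{\Gamma(s)}{\Gamma(\beta(s-1)+1)}=\Gamma(s),
\]
which matches $(\mathcal{M}\rho_{Y_0})(s)$ on the common strip of analyticity. The uniqueness of the Mellin transform then gives $\rho_{Y_0}=\rho_{Y_0^{\beta}}\star M_\beta$, which is precisely the equality in law \eqref{exp-decomp}.
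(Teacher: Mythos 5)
Your proposal is correct and follows essentially the same route as the paper: compute the Mellin transforms of $\rho_{Y_0}$, of the Weibull density of $Y_0^\beta$ (giving $\Gamma(\beta(s-1)+1)$), and of $M_\beta$ (giving $\Gamma(s)/\Gamma(\beta(s-1)+1)$), observe the cancellation to $\Gamma(s)$, and conclude by uniqueness of the Mellin transform. The only difference is that the paper simply quotes the known formula for the generalized moments of $M_\beta$, whereas you sketch how to derive and justify it on a strip, which is a reasonable elaboration of the same argument.
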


\begin{proof}
It is a well-known fact that $Y_0^\beta$ has a Weibull distribution with parameter $1/\beta$ and density $g_\beta (x)=\frac{1}{\beta}x^{1/\beta-1}\mathrm{e}^{-x^{1/\beta}}$, $x\geq 0$. The probability density of the product of two independent random variables is given by the Mellin convolution of the corresponding densities of the two variables. Hence, the probability density $\rho_{\beta}$ of $Y_{0}^{\beta}Y_{\beta}$ is expressed as
\begin{equation}
\rho_{\beta}(x)=(M_{\beta}\star g_\beta)(x),\quad x\geq 0.\label{eq:density_exp}
\end{equation}
It is not difficult to show that, for any $z\in\mathbb{C}$ such that $\Re(z)>0$, the Mellin transform of $g_\beta$ is
\[
(\mathcal{M}g_\beta)(z)=\int_{0}^{\infty}x^{z-1}g_\beta(x)\,\mathrm{d}x=\Gamma(\beta(z-1)+1),
\]
and the Mellin transform of $M_{\beta}$ is given by the generalized
moments of $M_{\beta}$, that is, 
\[
(\mathcal{M}M_{\beta})(z)=\int_{0}^{\infty}x^{z-1}M_{\beta}(x)\,\mathrm{d}x=\frac{\Gamma(z)}{\Gamma(\beta(z-1)+1)}.
\]
Thus, for any $z\in\mathbb{C}$ such that $\Re(z)>0$, we have
\[
(\mathcal{M}\rho_{\beta})(z)=(\mathcal{M}M_{\beta})(z)(\mathcal{M}g_\beta)(z)=\frac{\Gamma(z)\Gamma(\beta(z-1)+1)}{\Gamma(\beta(z-1)+1)}=\Gamma(z).
\]
On the other hand, the Mellin transform of the exponential distribution
is $\Gamma(z)$, $\Re(z)>0$. Therefore, by the uniqueness of the Mellin transform
(cf.~Theorem 1.5.22 in \cite{Glaeske2006} and Corollary~5 in \cite{Butzer.1997}), the claim follows. 
\end{proof}
\begin{rem}
An alternative method for proving this is to show that the Laplace transform
of the density $\rho_{\beta}$ matches the Laplace transform of the
exponential density with the help of \eqref{eq:LT-Mbeta} and the formula
\[
\int_{0}^{\infty}\mathrm{e}^{-x}E_{\beta}(-x^{\beta}t)\,\mathrm{d}x=\frac{1}{1+t},\quad\beta\geq0,\;t\ge0,
\]
see, for example, Eq.~(E.51) in \cite{Mainardi2010}.
\end{rem}
The self-decomposition \eqref{exp-decomp} characterizes the exponential distribution, as shown in the following proposition. Let $Y$ be a random variable with pdf 
$\rho_Y$ on $\mathbb{R}^+$ satisfying:

\noindent \textbf{Assumption (H)}.\label{assumptionH} Let $0<a<1<b$. The function $x\mapsto x^{z-1}\rho_{Y}(x),\;\; x\geq 0$, is integrable for every $z$ in the strip $a<\Re(z)<b$. 
\begin{prop}\label{prop:characterization-exp}
     If $Y$ satisfies \eqref{exp-decomp} for some $\beta \in (0,1)$, then 
     \[
     Y\overset{\mathcal{L}}{=}Y_{0}.
     \] 
\end{prop}
\begin{proof}
Since $Y$ fulfills assumption \textbf{(H)} and condition \eqref{exp-decomp}, the Mellin transform $(\mathcal{M}\rho_Y)$ can be expressed as 
\begin{equation}\label{eq:mellin}
    (\mathcal{M}\rho_Y)(z)=\frac{\Gamma(z)}{\Gamma(\beta(z-1)+1)}(\mathcal{M}\rho_Y)(\beta(z-1)+1),\quad 0<a<\Re(z)<b.
\end{equation}
Set $h(z):=\dfrac{(\mathcal{M}\rho_Y)(z)}{\Gamma(z)}$, $0<a<\Re(z)<b$. From Eq.~\eqref{eq:mellin}, we infer that $h$ satisfies the following functional equation
\begin{equation}\label{funct-eq}
h(z)=h(\beta (z-1) +1),\quad 0<a<\Re(z)<b.
\end{equation}
By iterating the functional equation, we conclude that for any integer $n\geq 2$, we have
$$
h(z)=h\left(\beta^n z +(1-\beta)\sum_{k=0}^{k=n-1} \beta^k\right),\quad 0<a<\Re(z)<b.
$$
Given the continuity of $h$, as $n$ approaches infinity, we obtain 
\[
h(z)=h(1)=1,
\]
for every $z$ such that $0<a<\Re(z)<b$. Theorem 1.5.22 in \cite{Glaeske2006}, on the uniqueness property of the Mellin transform, therefore, allows us to conclude that the random variables $Y$ and $Y_0$ have the same distribution.  
\end{proof}

\section{Multiplicative self-decomposition of the gamma distribution\label{sec:decomp-gamma}}
Let $r>0$ be given, and let $Z_r$ be a gamma random variable with density $\rho_{Z_r}(t) = \frac{1}{\Gamma(r)} t^{r-1} e^{-t}$, $t > 0$. 
The multiplicative self-decomposition of $Z_r$ uses the so-called Fox $H$-density function, which we introduce now.  According to standard notation (see \cite{Kilbas2004} for more details), the $H$-functions were introduced
by Fox in~\cite{Fox1961} and are defined by the Mellin--Barnes type integral, with the integrand containing products and quotients of the Euler gamma functions. Examples include Wright's generalized hypergeometric functions, $M$-Wright functions, Meijer $G$-functions, and the exponential function, among others. To establish our result, we shall need to consider the following particular Fox $H$-function $H^{1,0}_{1,1}$ defined via Mellin-Barnes integrals by
\begin{equation}\label{Barnes-resp}
H_{1,1}^{1,0}\left(z\,\middle|\,\genfrac{}{}{0pt}{}{(r-\alpha,\alpha)}{(r-1,1)}
\right):=\frac{1}{2\pi i}\int_{\mathscr{L}}\mathcal{H}_{1,1}^{1,0}(s)z^{-s}\,\mathrm{d}s, \quad 0<\alpha<1,
\end{equation}
where
\[
\mathcal{H}_{1,1}^{1,0}(s):=\frac{\Gamma(r-1+s)}{\Gamma(r-\alpha+\alpha s)},
\]
and $\mathscr{L}$ is a suitable contour in $\mathbb{C}$, $z^{-s}=\exp(-s\log|z|+\mathrm{i}\arg(z))$,
$\log|z|$ represents the natural logarithm of $|z|$ and $\arg(z)$
is not necessarily the principal value. The following proposition establishes the non-negativity of $\mathcal{H}_{1,1}^{1,0}$, which is a prerequisite for identifying it as a pdf.
\begin{prop}\label{eq: pdf-Fox}
    The Fox $H$-function $H_{1,1}^{1,0}\left(s\,\middle|\,\genfrac{}{}{0pt}{}{(r-\alpha,\alpha)}{(r-1,1)}\right)$ defines a probability density function on $\R^+$.    
\end{prop}
\begin{proof}
Since $a^{*}=\Delta=1-\alpha>0$, Theorems~1.2 and 2.2 in \cite{Kilbas2004} assert that the contour $\mathscr{L}$ in (\ref{Barnes-resp}) may be chosen as a left-oriented loop within a horizontal strip, beginning at the location $-\infty+\mathrm{i}$ and ending at $-\infty-\mathrm{i}$. Moreover, the Fox $H$-function defined in (\ref{Barnes-resp}) is an analytic
function on $\mathbb{C}\backslash\{0\}$ and is expressed as follows
\[
H_{1,1}^{1,0}(z)=\sum_{l=0}^{\infty}\text{Res}_{s=b_{l}}\left[\mathcal{H}_{1,1}^{1,0}(s)z^{-s}\right]=\sum_{l=0}^{\infty}\text{Res}_{s=b_{l}}\left[\frac{\Gamma(r-1+s)}{\Gamma(r-\alpha+\alpha s)}z^{-s}\right],
\]
where $b_{l}=-l+1-r$, $l\in\mathbb{N}$ are the poles of the function
$\Gamma(r-1+\cdot)$. It is easy to see that, for any $l\in\mathbb{N}$, we obtain
\[
\text{Res}_{s=b_{l}}\left[\frac{\Gamma(r-1+s)}{\Gamma(r-\alpha+\alpha s)}z^{-s}\right]=\frac{(-1)^{l}}{l!}\frac{z^{l+r-1}}{\Gamma\left(r(1-\alpha)-\alpha l\right)}.
\]
This leads to the following series expansion of the Fox $H$-function in \eqref{Barnes-resp}
\[
H_{1,1}^{1,0}\left(z\,\middle|\,\genfrac{}{}{0pt}{}{(r-\alpha,\alpha)}{(r-1,1)}\right)=z^{r-1}\sum_{l=0}^{\infty}\dfrac{(-1)^{l}}{l!\Gamma\left(r(1-\alpha)-\alpha l\right)}z^{l}.
\]
We note that the above series representation can be written in terms of the well-known Wright function $W_{\lambda,\mu}$ (see \cite{Wright1940b} or \cite{Mainardi2010}) as
\begin{equation}\label{eq:Relation-H-W}
H_{1,1}^{1,0}\left(z\,\middle|\,\genfrac{}{}{0pt}{}{(r-\alpha,\alpha)}{(r-1,1)}\right)=z^{r-1}W_{-\alpha,r(1-\alpha)}(-z).
\end{equation}
On the other hand, the Laplace transform of $W_{-\alpha,r(1-\alpha)}(\cdot)$ is given by the generalized Mittag-Leffler function (see \cite[Eq.~(F.25)]{Mainardi2010}), that is,
\begin{equation}\label{eq:LT-Wright-function}
\big(\mathcal{L}W_{-\alpha,r(1-\alpha)}(-\cdot)\big)(s)=E_{\alpha,r(1-\alpha)+\alpha}(-s),\; s>0.    
\end{equation}
For any $\alpha\in (0,1)$, it was shown by Schneider \cite{Schneider1996} that $E_{\alpha,r(1-\alpha)+\alpha}(-s)$ is completely monotone. Hence, the characterization of a completely monotonic function implies that $W_{-\alpha,r(1-\alpha)}(\cdot)$ is nonnegative on $\R^+$, which in turn ensures that $H_{1,1}^{1,0}\left(s\,\middle|\,\genfrac{}{}{0pt}{}{(r-\alpha,\alpha)}{(r-1,1)}\right)$ is also nonnegative on $\R^+$. 

The Mellin transform of $H_{1,1}^{1,0}\left(s\,\middle|\,\genfrac{}{}{0pt}{}{(r-\alpha,\alpha)}{(r-1,1)}\right)$ can be computed with the help of Theorem~2.2 in \cite{Kilbas2004}, valid on the open right half-plane $\Re(z) >1-r$, as
\begin{equation}\label{eq:Mellin-density}
\left(\mathcal{M}H_{1,1}^{1,0}\left(\cdot\,\middle|\,\genfrac{}{}{0pt}{}{(r-\alpha,\alpha)}{(r-1,1)}\right)\right)(z)=\mathcal{H}_{1,1}^{1,0}\left(z\,\middle|\,\genfrac{}{}{0pt}{}{(r-\alpha,\alpha)}{(r-1,1)}\right)=\frac{\Gamma(r-1+z)}{\Gamma(r-\alpha+\alpha z)}.
\end{equation}
Taking $z=1$ we can see that $H_{1,1}^{1,0}\left(s\,\middle|\,\genfrac{}{}{0pt}{}{(r-\alpha,\alpha)}{(r-1,1)}\right)$ is normalized. Thus, we may conclude that $H_{1,1}^{1,0}\left(s\,\middle|\,\genfrac{}{}{0pt}{}{(r-\alpha,\alpha)}{(r-1,1)}\right)$ is indeed a pdf on $\R^+$.
\end{proof}

For any $\alpha \in (0,1)$, let $Y_{\alpha,r}$ be a positive random variable (independent of $Z_r$) with density $\rho_{Y_{\alpha,r}}$ given by
\begin{equation}\label{eq:H-density}
\rho_{Y_{\alpha,r}}(t)=H_{1,1}^{1,0}\left(t\,\middle|\,\genfrac{}{}{0pt}{}{(r-\alpha,\alpha)}{(r-1,1)}\right),\quad t>0.    
\end{equation}

With this, we state the self-decomposition of the random variable $Z_r$.
\begin{thm}\label{thm:self-decomp-gamma}
 For any $\alpha\in (0,1)$, the following self-decomposition holds
\begin{equation}\label{eq:decomposition-gamma}
Z_{r}\overset{\mathcal{L}}{=}Z_{r}^{\alpha}Y_{\alpha,r}. 
\end{equation}
\end{thm}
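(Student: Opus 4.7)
The plan is to mirror the argument used in Theorem~\ref{thm:self-decomp-exp}: compute the Mellin transform of each factor on the right-hand side of \eqref{eq:decomposition-gamma}, multiply, observe the telescoping cancellation with the Mellin transform of the gamma density, and invoke uniqueness of the Mellin transform.

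First, I would compute the Mellin transform of $Z_r^{\alpha}$ directly. Since
\[
(\mathcal{M}\rho_{Z_{r}^{\alpha}})(z)=\mathbb{E}\bigl[Z_r^{\alpha(z-1)}\bigr]=\frac{1}{\Gamma(r)}\int_{0}^{\infty}t^{\alpha(z-1)+r-1}\mathrm{e}^{-t}\,\mathrm{d}t=\frac{\Gamma(\alpha(z-1)+r)}{\Gamma(r)},
\]
this identity holds in the right half-plane $\Re(z)>1-r/\alpha$. Next, I would invoke the Mellin transform formula \eqref{eq:Mellin-density} already established for the Fox $H$-density \eqref{eq:H-density}, namely
\[
(\mathcal{M}\rho_{Y_{\alpha,r}})(z)=\frac{\Gamma(r-1+z)}{\Gamma(r-\alpha+\alpha z)}=\frac{\Gamma(r+z-1)}{\Gamma(\alpha(z-1)+r)},\quad \Re(z)>1-r.
\]

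Then, since $Z_r^{\alpha}$ and $Y_{\alpha,r}$ are independent, the density $\rho$ of the product $Z_r^{\alpha}Y_{\alpha,r}$ is the Mellin convolution $\rho_{Z_r^\alpha}\star\rho_{Y_{\alpha,r}}$. Applying the convolution property of the Mellin transform in the common strip of analyticity (the intersection $\Re(z)>\max\{1-r,1-r/\alpha\}$, which is non-empty and contains $z=1$), the two $\Gamma(\alpha(z-1)+r)$ factors cancel, yielding
\[
(\mathcal{M}\rho)(z)=\frac{\Gamma(\alpha(z-1)+r)}{\Gamma(r)}\cdot\frac{\Gamma(r+z-1)}{\Gamma(\alpha(z-1)+r)}=\frac{\Gamma(r+z-1)}{\Gamma(r)},
\]
which is exactly $(\mathcal{M}\rho_{Z_r})(z)$ on $\Re(z)>1-r$. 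By the uniqueness theorem for Mellin transforms (Corollary~5 of \cite{Butzer.1997}), we conclude $\rho=\rho_{Z_r}$, giving \eqref{eq:decomposition-gamma}.

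There is no serious conceptual obstacle: the argument is essentially a clean telescoping of gamma ratios, and positivity/normalization of the Fox $H$-density has already been settled in the preceding discussion. The only care-requiring points are bookkeeping, namely (i) verifying that the strips of analyticity overlap in a vertical strip to which the convolution property and the uniqueness theorem apply, and (ii) making sure the index conventions in \eqref{eq:Mellin-density} line up with those of $\mathcal{M}\rho_{Z_r^\alpha}$ so that the cancellation is exact; both are straightforward once written out explicitly.
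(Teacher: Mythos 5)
Your proposal is correct and follows essentially the same route as the paper's own proof: compute the Mellin transforms of $\rho_{Z_r^{\alpha}}$ and $\rho_{Y_{\alpha,r}}$ (the latter via \eqref{eq:Mellin-density}), multiply using independence, observe the cancellation of $\Gamma(r-\alpha+\alpha z)=\Gamma(\alpha(z-1)+r)$, and conclude by uniqueness of the Mellin transform. Your extra bookkeeping on the overlap of the strips of analyticity is a mild refinement of what the paper states but changes nothing substantive.
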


\begin{proof}
    Let $\alpha\in (0,1)$ be given.  Thanks to the uniqueness property of the Mellin transform, we need to show that the Mellin transforms on each side of \eqref{eq:decomposition-gamma} coincide. In fact, the Mellin transforms of the densities $\rho_{Z_r}$ and $\rho_{Z_r^\alpha}$ are well-defined for any $z\in\mathbb{C}$ with $\Re(z)>1-r$ and are given by
    \[
    (\mathcal{M}\rho_{Z_r})(z)=\frac{\Gamma(r-1+z)}{\Gamma(r)}\qquad \mathrm{and}\qquad (\mathcal{M}\rho_{Z_r^\alpha})(z)=\frac{\Gamma(r-\alpha +\alpha z)}{\Gamma(r)}.    
    \]
Since the random variables $Z_r^\alpha$ and $Y_{\alpha,r}$ are independent, it follows that $$(\mathcal{M}\rho_{Z_r^\alpha Y_{\alpha,r}})(z)=(\mathcal{M}\rho_{Z_r^\alpha})(z)(\mathcal{M}\rho_{Y_{\alpha,r}})(z)$$ leading to
$$
(\mathcal{M}\rho_{Z_r}(z)=(\mathcal{M}\rho_{Z_r^\alpha Y_{\alpha,r}})(z),
$$
on the open right half-plane $\Re(z) > 1-r$.
This concludes the proof.
\end{proof}

We will show that Eq.~\eqref{eq:decomposition-gamma} characterizes the gamma distributions. 
 Let $Z$ be a random variable with a pdf $\rho_Z$ on $\mathbb{R}^+$ satisfying:

\noindent \textbf{Assumption (H')}. Let $c>1$ be given. The function $x\mapsto x^{z-1}\rho_{Z}(x)$, $x\geq 0$, is integrable for every $z$ in the strip $1-r<\Re(z)<c$. 

\begin{prop}\label{prop:characterization-gamma}
   If $Z$ satisfies \eqref{eq:decomposition-gamma} for a pair $(r,\alpha)\in (0,\infty)\times (0,1)$, then $$Z\overset{\mathcal{L}}{=}Z_r.$$
\end{prop}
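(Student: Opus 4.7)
The plan is to mirror the argument used for the exponential case in Proposition~\ref{prop:characterization-exp}. Applying the Mellin transform to both sides of \eqref{eq:decomposition-gamma}, using independence of $Z^\alpha$ and $Y_{\alpha,r}$, the scaling rule $\mathcal{M}\rho_{Z^\alpha}(z)=\mathcal{M}\rho_Z(\alpha(z-1)+1)$, and the expression \eqref{eq:Mellin-density} for the Mellin transform of $\rho_{Y_{\alpha,r}}$, I would obtain the identity
\[
(\mathcal{M}\rho_Z)(z)=(\mathcal{M}\rho_Z)\bigl(\alpha(z-1)+1\bigr)\,\frac{\Gamma(r-1+z)}{\Gamma(r-\alpha+\alpha z)},\qquad \Re(z)>1-r.
\]

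Next, to absorb the gamma factors, I would introduce the auxiliary function $h(z):=\Gamma(r)(\mathcal{M}\rho_Z)(z)/\Gamma(r-1+z)$. A direct substitution, noting that $r-1+(\alpha(z-1)+1)=r-\alpha+\alpha z$, reduces the identity above to the functional equation
\[
h(z)=h\bigl(\alpha(z-1)+1\bigr),\qquad \Re(z)>1-r.
\]
Iterating this equation $n$ times gives $h(z)=h\bigl(\alpha^n(z-1)+1\bigr)$, and since $\alpha\in(0,1)$, the argument $\alpha^n(z-1)+1$ stays in the half-plane $\Re(z)>1-r$ for all $n$ large enough and tends to $1$ as $n\to\infty$. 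By the continuity of $h$ on its strip of analyticity, this yields $h(z)=h(1)$ for every admissible $z$.

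Finally, since $\rho_Z$ is a probability density, $(\mathcal{M}\rho_Z)(1)=1$, and $\Gamma(r-1+1)=\Gamma(r)$, so $h(1)=1$. Therefore $(\mathcal{M}\rho_Z)(z)=\Gamma(r-1+z)/\Gamma(r)=(\mathcal{M}\rho_{Z_r})(z)$, and the uniqueness of the Mellin transform (Corollary~5 in \cite{Butzer.1997}) yields $Z\overset{\mathcal{L}}{=}Z_r$. There is no genuine obstacle here: the argument is a direct adaptation of Proposition~\ref{prop:characterization-exp}, and the only points requiring mild care are the correct identification of the strip $\Re(z)>1-r$ common to both Mellin transforms and the verification that the iterated arguments remain within it, both of which follow immediately from $0<\alpha<1$.
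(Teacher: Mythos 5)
Your proposal is correct and follows essentially the same route as the paper: take Mellin transforms, divide by $\Gamma(r-1+z)$ to obtain the functional equation $h(z)=h(\alpha(z-1)+1)$, iterate, pass to the limit using continuity, and conclude by uniqueness of the Mellin transform. The only differences are cosmetic (your $h$ carries an extra factor $\Gamma(r)$ so that $h(1)=1$, and you spell out that the iterated arguments stay in the half-plane $\Re(z)>1-r$, which in fact holds for every $n$ since $\alpha^n(\Re(z)-1)>-\alpha^n r>-r$).
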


\begin{proof}Under the Assumption~\textbf{(H')} and \eqref{eq:decomposition-gamma}, the Mellin transform $\mathcal{M}\rho_{Z}$ satisfies
\begin{equation}\label{eq:MT-Z}
(\mathcal{M}\rho_{Z})(z)=\frac{\Gamma(r-1+z)}{\Gamma(r-\alpha+\alpha z)}(\mathcal{M}\rho_{Z})(\alpha(z-1)+1),\quad 1-r<\Re(z)<c.
\end{equation}
Set $h(z):=(\mathcal{M}\rho_Z)(z)/\Gamma(z-1+r)$, $1-r<\Re(z)<c$. Then, Eq.~\eqref{eq:MT-Z} gives rise to the following functional equation
\[
h(z)=h(\alpha (z-1)+1), \quad 1-r<\Re(z)<c.
\]
As with the function in Eq.~\eqref{funct-eq}, we infer that
\[h(z)=h(1)=\frac{1}{\Gamma(r)}, \quad 1-r<\Re(z)<c.
\] 
Thus, we conclude that
\[
(\mathcal{M}\rho_Z)(z)=\frac{\Gamma(z+r-1)}{\Gamma(r)}=(\mathcal{M}\rho_{Z_r})(z),\quad 1-r<\Re(z)<c.
\]
The above, together with the uniqueness property of the Mellin transform, establishes the result of the proposition.
\end{proof}

\section{On the multiplicative self-decomposition of the Gaussian distribution\label{sec:decomp-gaussian}}
Shanbhag et al.~\cite{Shanbhag.1977} showed that the absolute value of a standard normal distribution possesses the property of self-decomposability. However, the residual random variable involved in the self-decomposition was not identified in their work. In this section, we identify the random variable as a consequence of the self-decomposition of the gamma distribution.

Let $U=\mathcal{N}(0,1)$ be a centered normal random variable. 
Since $U^{2}/2$ has a gamma distribution with parameter
$1/2$, then according to Theorem \ref{thm:self-decomp-gamma}, for any $\alpha\in(0,1)$, we have the self-decomposition \eqref{eq:decomposition-gamma}
for $U^{2}/2$, that is,
\[
U^{2}/2\overset{\mathcal{L}}{=}\left(U^{2}/2\right)^{\alpha}Y_{\alpha,1/2},
\]
with the density of is given by
\[
\rho_{Y_{\alpha,1/2}}(t)=H_{1,1}^{1,0}\left(t\,\middle|\,\genfrac{}{}{0pt}{}{(1/2-\alpha,\alpha)}{(-1/2,1)}\right),\quad t>0.
\]
Hence, it is easy to see that 
\[
\left|U\right|\overset{\mathcal{L}}{=}\left|U\right|^{\alpha}\,2^{(1-\alpha)/2}Y_{\alpha,1/2}^{1/2}.
\]
Now, after some calculations, we deduce that the residual random variable $X_{\alpha}:=2^{(1-\alpha)/2}Y_{\alpha,1/2}^{1/2}$ has the density function
\[
\rho_{X_{\alpha}}(t)=2^{\alpha}\,t\,H_{1,1}^{1,0}\left(2^{\alpha-1}\,t^{2}\,\middle|\,\genfrac{}{}{0pt}{}{(1/2-\alpha,\alpha)}{(-1/2,1)}\right),\quad t>0.
\]
It follows from Equation~\eqref{eq:Relation-H-W} that the density $\rho_{X_{\alpha}}$ can also be written in terms of the Wright function, namely
\begin{equation}\label{eq:density-Xalpha}
\rho_{X_{\alpha}}(t) =2^{(1+\alpha)/2}W_{-\alpha,(1-\alpha)/2}(-2^{\alpha-1}t^{2}),\quad t>0.
\end{equation}
We summarize the results above as follows:
\begin{thm}\label{thm:decomposition-Gaussian}
  Let $U$ be a centered normal random variable, and $\alpha\in(0,1)$ be given. The following self-decomposition holds
  \begin{equation}
|U|\overset{\mathcal{L}}{=}|U|^{\alpha}X_{\alpha},\label{eq:Guassian-decomposition1}
\end{equation}
where the random variable $X_\alpha$ is independent of $U$ and has a density function given by \eqref{eq:density-Xalpha}.
\end{thm}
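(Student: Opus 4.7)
The plan is to reduce the Gaussian case to the already established gamma self-decomposition (Theorem~\ref{thm:self-decomp-gamma}) via the classical identity $U^2/2 \sim \mathrm{Gamma}(1/2)$, then push the decomposition through the square-root transformation, and finally rewrite the resulting Fox $H$-density as a Wright function series.

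First I would observe that if $U \sim \mathcal{N}(0,1)$, then $V:=U^2/2$ is gamma with parameter $r=1/2$. Applying Theorem~\ref{thm:self-decomp-gamma} with $r=1/2$ yields $V \overset{\mathcal{L}}{=} V^{\alpha} Y_{\alpha,1/2}$, where $Y_{\alpha,1/2}$ is independent of $V$ and has density $\rho_{Y_{\alpha,1/2}}(t)=H_{1,1}^{1,0}\bigl(t\,\bigl|\,{(1/2-\alpha,\alpha);\,(-1/2,1)}\bigr)$. Taking positive square roots on both sides and rearranging the constant factor, I obtain
\[
|U| \overset{\mathcal{L}}{=} |U|^{\alpha}\, 2^{(1-\alpha)/2}\, Y_{\alpha,1/2}^{1/2},
\]
so the candidate residual is $X_\alpha := 2^{(1-\alpha)/2}\, Y_{\alpha,1/2}^{1/2}$, independent of $U$ by construction.

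Next I would compute $\rho_{X_\alpha}$ by a straightforward change of variables $t \mapsto 2^{1-\alpha}t^2$, which gives
\[
\rho_{X_\alpha}(t) = 2^{\alpha}\, t\, H_{1,1}^{1,0}\!\left(2^{1-\alpha}t^{2}\,\middle|\,\genfrac{}{}{0pt}{}{(1/2-\alpha,\alpha)}{(-1/2,1)}\right),\qquad t>0.
\]
Then I would invoke the Mellin--Barnes representation \eqref{Barnes-resp} and the residue calculus already carried out for the Fox $H$-function, with the poles $b_l = -l + 1/2$ of $\Gamma(-1/2+s)$. Summing the residues term by term produces a power series in $t^2$ whose coefficients are precisely those of the Wright function $W_{-\alpha,(1-\alpha)/2}$, yielding $\rho_{X_\alpha}(t) = 2^{(1+\alpha)/2}\, W_{-\alpha,(1-\alpha)/2}(-2^{\alpha-1}t^2)$.

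The only genuine subtlety, rather than a real obstacle, is bookkeeping in the residue computation: one must correctly track the powers of $2$ emerging from the factor $(2^{\alpha-1}t^2)^{l-1/2}$ and confirm that the extraneous $t$ from the Jacobian combines with the $t^{-1}$ inside the expansion to leave only even powers of $t$, so that the series indeed matches the defining series of $W_{-\alpha,(1-\alpha)/2}(-2^{\alpha-1}t^2)$. Apart from this, nonnegativity and normalization of $\rho_{X_\alpha}$ are inherited from the corresponding properties of $\rho_{Y_{\alpha,1/2}}$ established in Section~\ref{sec:decomp-gamma} via the complete monotonicity argument based on~\eqref{eq:LT-Wright-function}, so no additional verification is needed.
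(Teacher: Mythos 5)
Your proposal follows essentially the same route as the paper's own argument: reduce to the gamma case via $U^{2}/2\sim\mathrm{Gamma}(1/2)$, take square roots, compute the density of $X_{\alpha}=2^{(1-\alpha)/2}Y_{\alpha,1/2}^{1/2}$ by a change of variables, and identify the resulting Fox $H$-function with the Wright function by the same residue expansion at the poles $b_{l}=-l+1/2$. The only remark worth making is that the intermediate display $\rho_{X_{\alpha}}(t)=2^{\alpha}t\,H_{1,1}^{1,0}(2^{1-\alpha}t^{2}\mid\cdots)$ should carry the argument $2^{\alpha-1}t^{2}$ for the normalization to work out, a slip you share with the paper's own text and which disappears in the final Wright-function formula.
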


Now we show that \eqref{eq:Guassian-decomposition1} characterizes the Gaussian distribution. To this end, we need the Mellin transforms of $\rho_{|\mathcal{N}(0,1)|}$ and $\rho_{X_\alpha}$. Using a change of variable formula, we obtain for any $z\in\mathbb{C}$ with $\Re(z)>0$ that 
\begin{equation} \label{eq:MT-Gaussian-distribution}
(\mathcal{M}\rho_{|\mathcal{N}(0,1)|})(z) =\sqrt{\frac{2}{\pi}}\int_0^\infty t^{z-1}\exp \left(-t^2/2\right)\mathrm{d}t =\frac{2^{z/2}}{\sqrt{2\pi}}\Gamma\left(\frac{z}{2}\right).
\end{equation}
From this, we deduce the Mellin transform of $|\mathcal{N}(0,1)|^{\alpha}$, that is, 
\[
(\mathcal{M}\rho_{|\mathcal{N}(0,1)|^\alpha})(z)=\frac{2^{(\alpha(z-1)+1)/2}}{\sqrt{2\pi}}\Gamma\left(\frac{\alpha(z-1)+1}{2}\right).
\]
Therefore, we obtain the Mellin transform of $\rho_{X_\alpha}$ via \eqref{eq:Guassian-decomposition1}, namely
\[
(\mathcal{M}\rho_{X_\alpha})(z)=2^{\frac{z-1}{2}(1-\alpha)}\frac{\Gamma\left(\frac{z}{2}\right)}{\Gamma\left(\frac{1}{2}+\alpha(\frac{z-1}{2})\right)}.
\]
\begin{prop}\label{prop:characterization-Gaussian}
Let $U$ be a symmetric random variable with
a continuous pdf $\rho_{U}$ on $\mathbb{R}$ satisfying Assumption~\textbf{(H)}. If $U$
verifies (\ref{eq:Guassian-decomposition1}) for some $\alpha\in(0,1)$, then
\[
U\overset{\mathcal{L}}{=}\mathcal{N}(0,1).
\]
\end{prop}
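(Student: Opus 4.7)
The plan is to adapt the Mellin-transform strategy from Propositions~\ref{prop:characterization-exp} and~\ref{prop:characterization-gamma}. Since $U$ is symmetric with continuous density on $\mathbb{R}$, the random variable $|U|$ has a continuous density on $\mathbb{R}^+$ and a well-defined Mellin transform $f(s):=\mathcal{M}\rho_{|U|}(s)$ on a right half-plane containing $s=1$. Applying the Mellin transform to~\eqref{eq:Guassian-decomposition1}, and using the independence of $|U|^\alpha$ and $X_\alpha$ together with the explicit expression
\[
\mathcal{M}\rho_{X_\alpha}(s)=2^{\frac{(s-1)(1-\alpha)}{2}}\,\frac{\Gamma(s/2)}{\Gamma\!\left(\frac{\alpha(s-1)+1}{2}\right)}
\]
derived just before the proposition, I obtain the functional equation
\[
f(s)=f(\alpha(s-1)+1)\cdot 2^{\frac{(s-1)(1-\alpha)}{2}}\,\frac{\Gamma(s/2)}{\Gamma\!\left(\frac{\alpha(s-1)+1}{2}\right)}.
\]

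Next, I would introduce $h(s):=f(s)/\bigl(2^{s/2}\Gamma(s/2)\bigr)$, which is well-defined and continuous wherever $f$ is. A direct substitution shows that the powers of $2$ and the gamma ratios collapse, so that $h$ satisfies the clean functional equation $h(s)=h(\alpha(s-1)+1)$, exactly the equation that appears in the proof of Proposition~\ref{prop:characterization-gamma}. Iterating gives $h(s)=h(\alpha^n(s-1)+1)$ for every $n\in\mathbb{N}$, and since $s\mapsto\alpha(s-1)+1$ is a contraction toward the fixed point $s=1$, the continuity of $h$ forces $h(s)=h(1)$ on the strip of analyticity.

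Because $f(1)=\int_0^\infty\rho_{|U|}(t)\,\mathrm{d}t=1$ and $2^{1/2}\Gamma(1/2)=\sqrt{2\pi}$, we obtain $h(1)=1/\sqrt{2\pi}$, hence $f(s)=2^{s/2}\Gamma(s/2)/\sqrt{2\pi}=\mathcal{M}\rho_{|\mathcal{N}(0,1)|}(s)$ by~\eqref{eq:MT-Gaussian-distribution}. Uniqueness of the Mellin transform then yields $|U|\overset{\mathcal{L}}{=}|\mathcal{N}(0,1)|$, and the assumed symmetry of $U$ promotes this to $U\overset{\mathcal{L}}{=}\mathcal{N}(0,1)$. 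The main obstacle I foresee is the algebraic bookkeeping required to verify that the normalization $f(s)=2^{s/2}\Gamma(s/2)\,h(s)$ cancels the base-$2$ exponent and the gamma denominator cleanly; once that reduction is in hand, the argument is essentially a carbon copy of the gamma characterization.
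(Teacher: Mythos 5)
Your proposal is correct and follows essentially the same route as the paper: Mellin transform of the decomposition, division by the normalizing factor $2^{s/2}\Gamma(s/2)$ (the paper's choice $2^{(s-1)/2}\Gamma(\tfrac12+\tfrac{s-1}{2})$ differs only by the constant $\sqrt{2}$), the contraction $s\mapsto\alpha(s-1)+1$ toward the fixed point $1$, and the symmetry of $U$ to upgrade the law of $|U|$ to that of $U$. The cancellation you flag as the ``main obstacle'' does go through cleanly, since $\tfrac{(s-1)(1-\alpha)}{2}-\tfrac{s}{2}=-\tfrac{\alpha(s-1)+1}{2}$.
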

\begin{proof}
Using the independence of $|U|$ and $X_\alpha$ in \eqref{eq:Guassian-decomposition1}, the Mellin transform of the density $\rho_{|U|}$ satisfies
\begin{equation*}
(\mathcal{M}\rho_{|U|})(z)=2^{\frac{z-1}{2}(1-\alpha)}\frac{\Gamma\left(\frac{z}{2}\right)}{\Gamma\left(\frac{1}{2}+\alpha(\frac{z-1}{2})\right)}(\mathcal{M}\rho_{|U|})(\alpha(z-1)+1),
\end{equation*}
for every $z\in\mathbb{C}$ such that $0<a<\Re(z)<b$. 
Within the same framework as in the preceding cases, and employing the auxiliary function $h$, which is defined on the strip $0 < a < \Re(z) < b$, we have
\[
h(z):=\frac{(\mathcal{M}\rho_{|U|})(z)}{\Gamma\left(\frac{z}{2}\right)2^{\frac{z-1}{2}}},
\]
yields
\[
(\mathcal{M}\rho_{|U|})(z)=\frac{1}{\sqrt{2\pi}}\Gamma\left(\frac{z}{2}\right)2^{\frac{z}{2}}.
\]
Consequently, from \eqref{eq:MT-Gaussian-distribution} and the fact that $U$ is symmetric, we obtain
\[
U\overset{\mathcal{L}}{=}\mathcal{N}(0,1). \qedhere
\]
\end{proof}

\section{Generalized gamma distribution}
Formally introduced by E.~W.~Stacy in 1962~\cite{Stacy1962}, the generalized gamma (GG) distribution family is defined on $[0,\infty)$ by its pdf as 
\begin{equation}\label{GG}
f(t)= \mathrm{C} t^{a}\exp(-kt^{b}),\quad k>0,\ b>0,\ a>-1,  
\end{equation}
where $\mathrm{C}=\frac{bk^{(a+1)/b}}{\Gamma((a+1)/b)}$ is the normalizing constant. This distribution family provides a powerful, unifying framework for a variety of commonly used lifetime distributions. In particular, it includes several well-known special cases, such as the gamma, exponential, Weibull, and Rayleigh distributions. Furthermore, with appropriate parameter choices, it captures the Chi ($\chi$) and Chi-square ($\chi^{2}$) distributions, as well as specific functions of standard normal variables, such as their positive even powers, their modulus, and all positive powers of their modulus. The GG distribution can effectively represent data exhibiting various hazard rate patterns, including increasing, decreasing, bathtub-shaped, and unimodal behaviors. This property is useful in reliability and lifetime analysis.  We aim to prove that this model represents a broad class of multiplicative self-decomposable distributions.

The Mellin transform of $f$ is 
\[
h(z)=(\mathcal{M}f)(z)=\mathrm{C} \int_{0}^{\infty}t^{z+a-1}e^{-kt^{b}}\,\d t,
\]
provided that $\Re(s)>-a$. Using the change of variable
$u=kt^{b}$, we obtain

\[
h(z)=\mathrm{C} \int_{0}^{\infty}\left(\frac{u}{k}\right)^{\frac{s+a-1}{b}}e^{-u}\cdot\frac{1}{b}k^{-1/b}u^{\frac{1}{b}-1}\,\d u.
\]
Thus, for every $\operatorname{Re}(z)>-a$, the Mellin transform $h$ is given by
\[
h(z)=\mathrm{C}  \frac{k^{-(z+a)/b}}{b}\int_{0}^{\infty}u^{\frac{z+a}{b}-1}e^{-u}du=\mathrm{C}  \frac{k^{-(z+a)/b}}{b}\Gamma\left(\frac{z+a}{b}\right).
\]
To verify whether $f$ is multiplicatively self-decomposable, we must determine whether, for any $\alpha \in (0,1)$, the function 
\[
	F_\alpha(z)  =\frac{h(z)}{h(\alpha(z-1)+1)}
	 =k^{(\alpha-1)(z-1)/b}\frac{\Gamma\left(\frac{z+a}{b}\right)}{\Gamma\left(\frac{\alpha z+a+1-\alpha}{b}\right)}
\]
can serve as the Mellin transform of a probability density function. It is well known from the
properties of the Fox $H$-function that for $\operatorname{Re}(z)>-a$
we have 
\begin{equation*}
	F_\alpha(z) =\mathcal{M}\left(k^{(1-\alpha)/b} H_{1,1}^{1,0}\left(k^{(1-\alpha)/b}(\cdot)\,\middle|\,\genfrac{}{}{0pt}{}{((a+1-\alpha)/b,\alpha/b)}{(a/b,1/b)}\right)\right)(z).
	\end{equation*}
Hence, it remains to verify the non negativity of $H_{1,1}^{1,0}\left(t\,\middle|\,\genfrac{}{}{0pt}{}{((a+1-\alpha)/b,\alpha/b)}{(a/b,1/b)}\right)$ for $t\geq 0$.
Following the steps used in the proof of Proposition~\ref{eq: pdf-Fox}, we can express it in terms of the $M$-Wright function as follows 
\begin{align*}
	H_{1,1}^{1,0}\left(z\,\middle|\,\genfrac{}{}{0pt}{}{((a+1-\alpha)/b,\alpha/b)}{(a/b,1/b)}\right)\
	&=z^{a}\sum_{l=0}^{\infty}\frac{(-z^{b})^{l}}{l!\Gamma(-l\alpha+(a+1)(1-\alpha)/b)}\\
	&=z^{a}W_{-\alpha,(a+1)(1-\alpha)/b}(-z^{b}).
\end{align*}
To achieve our goal, we must ensure that the function $W_{-\alpha,(a+1)(1-\alpha)/b}(-t)$, $t\geq 0$, be nonnegative. This condition is related, via Schneider's result, to the complete monotonicity of its Laplace transform, which is expressed by the two-parameter Mittag-Leffler function, as follows:
\begin{equation}
	\bigl(\mathcal{L}W_{-\alpha,(a+1)(1-\alpha)/b}(-\cdot)\bigr)(s)=E_{\alpha,\alpha+(a+1)(1-\alpha)/b}(-s),\;s>0.
\end{equation}
According to Schneider, the function $E_{\alpha,\alpha+(a+1)(1-\alpha)/b}(-s)$
is completely mo\-notone for any $\alpha\in(0,1)$ and $a\geq-1$. Hence, $W_{-\alpha,(a+1)(1-\alpha)/b}(-t)$, $t\geq 0$, is nonnegative, leading to the conclusion that 
\[k^{(1-\alpha)/b} H_{1,1}^{1,0}\left(k^{(1-\alpha)/b}t\,\middle|\,\genfrac{}{}{0pt}{}{((a+1-\alpha)/b,\alpha/b)}{(a/b,1/b)}\right),\quad t\geq 0,
\]
is a pdf. Hence, we have the following result.
\begin{thm}\label{thm:decomposition-GG}
  Let $X$ be a random variable with pdf \eqref{GG}, and $\alpha\in(0,1)$ be given. The following self-decomposition holds
  \begin{equation}\label{eq:GG-decomposition}
X\overset{\mathcal{L}}{=}X^{\alpha}X_{\alpha},
\end{equation}
where the random variable $X_\alpha$ is independent of $X$ and has a density function $\rho_X$ given by 
\begin{equation}\label{eq:self-decom-GG}
\rho_X(t)=k^{(1-\alpha)/b} H_{1,1}^{1,0}\left(k^{(1-\alpha)/b}t\,\middle|\,\genfrac{}{}{0pt}{}{((a+1-\alpha)/b,\alpha/b)}{(a/b,1/b)}\right),\;t\ge0.
\end{equation}
\end{thm}

By proceeding analogously to the previous cases, we may characterize GG random variables that exhibit the multiplicative self-decomposition property given in \eqref{eq:GG-decomposition}.  
\begin{prop}\label{prop:characterization-GG}
Let $X$ be a nonnegative random variable with a continuous pdf $\rho_{X}$ satisfying Assumption~\textbf{(H)}. If $X$
verifies (\ref{eq:GG-decomposition}) for some $\alpha\in(0,1)$, then
\[
X\overset{\mathcal{L}}{=}\mathrm{GG}.
\]
\end{prop}

\section{Discussion and conclusion\label{sec:discussion}}
We have identified the density distributions of the residual terms in the self-decomposition considered by Shanbhag et al.~\cite{Shanbhag.1977},  thereby strengthening both their original results and their extension to the GG class. These densities are expressed in terms of special functions, specifically the product of a monomial and Wright's functions. In the special case of the exponential distribution, the density is represented by an $M$-Wright function. For the generalized gamma distributions and the distribution of the absolute value of a normal random variable, the probability density can be expressed in terms of a Fox $H$-function, which may itself be represented as a product of a monomial and a Wright function. In addition, we show that all self-decompositions characterize the corresponding distributions.

The general problem of multiplicative self-decomposability can be stated as follows. Let $X$ be a positive random variable satisfying Assumption (H); see page~\pageref{assumptionH}. We denote its Mellin transform by
\[
h(z)=\mathbb{E}[X^{z-1}],\quad  a<\Re(z)<b.
\]
So, $X$ is multiplicative self-decomposable if and only if the function
\[
F(z)=\frac{h(z)}{h(\alpha(z-1)+1)}
\]
is the Mellin transform of a positive random variable $Y$.
For instance, assume that $F$ meets the assumptions of Theorem~4.3.6 in \cite{Zamanian1968}; namely, $F$ is analytic on the strip $\{z \mid a' < \Re(z) < b'\}$ and satisfies
\[
\lvert F(z)\rvert \le K \lvert z\rvert^{-2}
\]
for some positive constant $K$. Then, independent of the choice $a < \gamma < b$, the function $f$ defined by
\[
f(x):=\frac{1}{2\pi\mathrm{i}}\int_{\gamma-\mathrm{i}\infty}^{{\gamma+\mathrm{i}\infty}} F(z)x^{-z}\,\d z,
\]
is continuous on $(0,\infty)$ and satisfies
\[
(\mathcal{M}f)(z)=F(z),\; \text{for at least } a'<\Re(z)<b'.
\]
The challenging task is to identify the conditions under which the function $f$ is nonnegative.

\subsection*{Funding}

The Center for Research in Mathematics and Applications (CIMA) has
partially funded this work related to Statistics, Stochastic Processes
and Applications (SSPA) group, through the grant UID/4674/2025 
of FCT-Funda{\c c\~a}o para a Ci{\^e}ncia e a Tecnologia, Portugal.

\bibliographystyle{alpha}

\bibliography{luis}

\end{document}